\newtheorem{theorem}{Theorem}[section]
\newtheorem{proposition}[theorem]{Proposition}
\newtheorem{lemma}[theorem]{Lemma}
\newtheorem{corollary}[theorem]{Corollary}
\theoremstyle{definition}
\newtheorem{definition}[theorem]{Definition}
\theoremstyle{remark}
\newtheorem{remark}[theorem]{Remark}
\numberwithin{equation}{section}
\def \reg{\mathrm{reg}}
\def \pd{\mathrm{pd}}
\def \set{\mathrm{set}}
\begin{document}


\title[On the facet ideal of an expanded simplicial complex]{On the facet ideal of an expanded simplicial complex}

\author[Somayeh Moradi]{Somayeh Moradi}
\address[Somayeh Moradi]{Department of Mathematics, Ilam University, P.O.Box 69315-516,
Ilam, Iran and School of Mathematics, Institute
for Research in Fundamental Sciences (IPM), P.O.Box 19395-5746, Tehran, Iran}
\email{somayeh.moradi1@gmail.com}

\author[Rahim Rahmati-Asghar]{Rahim Rahmati-Asghar}
\address[Rahim Rahmati-Asghar]{Department of Mathematics, Faculty of Basic Sciences, University of Maragheh,
P. O. Box 55181-83111, Maragheh, Iran and
School of Mathematics, Institute for Research in Fundamental Sciences (IPM),
P.O.Box 19395-5746, Tehran, Iran. }
\email{rahmatiasghar.r@gmail.com}

%

 \maketitle
%

\begin{abstract}
For a simplicial complex $\Delta$, the affect of the expansion functor on combinatorial properties of $\Delta$ and algebraic properties of its Stanley-Reisner ring has been studied in some previous papers.
In this paper, we consider the facet ideal $I(\Delta)$ and its Alexander dual which we denote by $J_{\Delta}$ to see how the expansion functor alter the algebraic properties of these ideals. It is shown that for any expansion $\Delta^{\alpha}$ the ideals $J_{\Delta}$ and $J_{\Delta^{\alpha}}$ have the same total Betti numbers and their Cohen-Macaulayness are equivalent, which implies that the regularities of the ideals $I(\Delta)$ and $I(\Delta^{\alpha})$ are equal. Moreover, the projective dimensions of $I(\Delta)$ and $I(\Delta^{\alpha})$ are compared.
In the sequel for a graph $G$, some properties that are equivalent in $G$ and its expansions are presented and for a Cohen-Macaulay (resp. sequentially Cohen-Macaulay and shellable) graph $G$, we give some conditions for adding or removing a vertex from $G$,  so that the remaining graph is still Cohen-Macaulay (resp. sequentially Cohen-Macaulay and shellable).\\
\end{abstract}

\section{\bf Introduction}

Making modifications to a simplicial complex or a monomial ideal so that they fulfill some special properties is a tool to construct new objects with some desired properties and has been considered in many research papers, see for example \cite{BaHe,BiVa,CoNa,FH,KhMo}.
In \cite{KhMo} the expansion functor on a simplicial complex was defined and some algebraic and combinatorial properties of a simplicial complex and its expansions were compared. Generalizing the results in \cite{KhMo}, the authors, in \cite{RaMo} studied the Stanley-Reisner ideal of a simplicial complex and that of its expansions  and it was proved that properties like being Cohen-Macaulay, sequentially Cohen-Macaulay, Buchsbaum and k-decomposable for these ideals are equivalent.

Since any squarefree monomial $I$ ideal may also be considered as the facet ideal of a simplicial complex, this natural question arises that how does the expansion functor on a simplicial complex affect algebraic
properties of the facet ideal of it.
In this paper we consider the facet ideal of a simplicial complex $\Delta$, and its Alexander dual $J_{\Delta}$ via the expansion functor on $\Delta$.

The paper proceeds as follows. In the first section, we recall some preliminaries which are needed in the sequel.
Section 2 is devoted to the study of the facet ideal of an expanded complex and its Alexander dual. One of the main results is the following theorem.

\begin{proposition} (Proposition \ref{facet})
Let $\Delta$ be a simplicial complex, $\alpha\in \mathbb{N}^n$ and $J_{\Delta}$ denotes the Alexander dual of the facet ideal $I(\Delta)$. Then
\begin{enumerate}[\upshape (i)]
  \item $\beta_i(S/J_\Delta)=\beta_i(S^\alpha/J_{\Delta^\alpha})$;
  \item $S/J_\Delta$ is Cohen-Macaulay if and only if $S^\alpha/J_{\Delta^\alpha}$ is Cohen-Macaulay.
\end{enumerate}
\end{proposition}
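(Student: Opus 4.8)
The plan is to make the ideal $J_{\Delta^{\alpha}}$ completely explicit in terms of $J_{\Delta}$, and then deduce (i) from a flat base change and (ii) from the Auslander--Buchsbaum formula together with a height computation. \emph{Step 1 (identification of $J_{\Delta^{\alpha}}$).} First I would record that $I(\Delta^{\alpha})$ is the expansion of $I(\Delta)$ obtained by replacing $x_i$ by $\mathfrak{p}_i=(x_{i1},\dots,x_{i\alpha_i})$: if $F$ runs over the facets of $\Delta$, the facets of $\Delta^{\alpha}$ are the sets $\{i_{j_i}:i\in F\}$, so $I(\Delta^{\alpha})$ is minimally generated by the monomials $\prod_{i\in F}x_{i,j_i}$. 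Taking Alexander duals,
\[
J_{\Delta^{\alpha}}=\bigcap_{F\in\mathcal{F}(\Delta)}\ \bigcap_{(j_i)_{i\in F}}\big(x_{i,j_i}:i\in F\big).
\]
A monomial $m$ lies in $\bigcap_{(j_i)_{i\in F}}(x_{i,j_i}:i\in F)$ exactly when, for every $i\in F$, it is divisible by all of $x_{i1},\dots,x_{i\alpha_i}$, i.e.\ by $y_i:=x_{i1}\cdots x_{i\alpha_i}$; hence that inner intersection equals $(y_i:i\in F)$, and therefore $J_{\Delta^{\alpha}}$ is the image of $J_{\Delta}$ under the $K$-algebra homomorphism $\varphi\colon S\to S^{\alpha}$, $x_i\mapsto y_i$. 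In other words, if $J_{\Delta}=(u_1,\dots,u_r)$ then $J_{\Delta^{\alpha}}=(\varphi(u_1),\dots,\varphi(u_r))$; note that this is the \emph{dual} of the expansion operation, not the expansion of $J_{\Delta}$ itself.

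\emph{Step 2 (proof of (i)).} Put $T=K[y_1,\dots,y_n]\subseteq S^{\alpha}$. Since the $y_i$ involve pairwise disjoint sets of variables, they are algebraically independent, so $\varphi$ identifies $(S,J_{\Delta})$ with $(T,J_{\Delta}T)$. The crucial point is that $S^{\alpha}$ is a \emph{free} $T$-module: the variables split as $S^{\alpha}=\bigotimes_{i}K[x_{i1},\dots,x_{i\alpha_i}]$ and $T=\bigotimes_i K[y_i]$, so it suffices to check that $K[z_1,\dots,z_m]$ is free over $K[z_1\cdots z_m]$, which holds because every monomial is uniquely of the form $(z_1\cdots z_m)^{\mu}z^{b}$ with $\min_j b_j=0$. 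Now $S^{\alpha}/J_{\Delta^{\alpha}}=(T/J_{\Delta}T)\otimes_{T}S^{\alpha}$, so tensoring a free $T$-resolution of $T/J_{\Delta}T$ with $S^{\alpha}$ yields a free $S^{\alpha}$-resolution of $S^{\alpha}/J_{\Delta^{\alpha}}$. Applying $-\otimes_{S^{\alpha}}K$ and using that $K=S^{\alpha}/\mathfrak{m}_{S^{\alpha}}$ carries the $T$-module structure $T/\mathfrak{m}_{T}$ (because each $y_i\in\mathfrak{m}_{S^{\alpha}}$), one gets $\mathrm{Tor}^{S^{\alpha}}_i(S^{\alpha}/J_{\Delta^{\alpha}},K)\cong\mathrm{Tor}^{T}_i(T/J_{\Delta}T,K)\cong\mathrm{Tor}^{S}_i(S/J_{\Delta},K)$, which is exactly (i). In particular $\pd_{S^{\alpha}}(S^{\alpha}/J_{\Delta^{\alpha}})=\pd_{S}(S/J_{\Delta})$.

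\emph{Step 3 (proof of (ii)).} Since $J_{\Delta}=\bigcap_{F}\mathfrak{p}_{F}$, we have $\mathrm{ht}(J_{\Delta})=\min\{|F|:F\in\mathcal{F}(\Delta)\}=:\delta$, and as each facet of $\Delta$ gives facets of $\Delta^{\alpha}$ of the same cardinality, also $\mathrm{ht}(J_{\Delta^{\alpha}})=\delta$. By Auslander--Buchsbaum over the polynomial rings $S$ and $S^{\alpha}$, $S/J_{\Delta}$ is Cohen--Macaulay iff $\pd_{S}(S/J_{\Delta})=\mathrm{ht}(J_{\Delta})=\delta$, and $S^{\alpha}/J_{\Delta^{\alpha}}$ is Cohen--Macaulay iff $\pd_{S^{\alpha}}(S^{\alpha}/J_{\Delta^{\alpha}})=\delta$. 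These two projective dimensions agree by Step 2, so the two Cohen--Macaulay conditions are equivalent, which is (ii).

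The main difficulty is Step 1: one must compute the Alexander dual of an expanded ideal and recognize that it converts the ideal-substitution $x_i\mapsto\mathfrak{p}_i$ into the monomial substitution $x_i\mapsto y_i=x_{i1}\cdots x_{i\alpha_i}$; once that structural fact is available, Steps 2 and 3 are essentially formal. (Alternatively, (i) could be derived from the known invariance of total Betti numbers under such disjoint-variable monomial substitutions, but the free base change above seems to be the shortest self-contained route.)
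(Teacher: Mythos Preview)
Your argument follows the same overall strategy as the paper: identify $J_{\Delta^{\alpha}}$ as the image of $J_{\Delta}$ under the substitution $\varphi\colon x_i\mapsto y_i=\prod_{j}x_{ij}$, and then exploit the flatness of $S^{\alpha}$ over $S$ via $\varphi$. The differences are in execution. The paper obtains the description of $J_{\Delta^{\alpha}}$ by an inductive lemma (Lemma~\ref{J}), whereas you compute the Alexander dual directly; the paper gets flatness by invoking Hartshorne's result that a regular sequence makes the ambient ring flat over the polynomial subring it generates, whereas you prove freeness by hand via the basis $\{z^{b}:\min_j b_j=0\}$; and for (ii) the paper simply cites \cite[Theorem~2.1.7]{BrHe} on flat base change, while you go through Auslander--Buchsbaum together with the observation $\mathrm{ht}(J_{\Delta})=\mathrm{ht}(J_{\Delta^{\alpha}})$. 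Your route is more self-contained, the paper's is shorter by outsourcing to the literature; both reach the same conclusions.

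One small slip in Step~1: the criterion you state, ``for \emph{every} $i\in F$ the monomial is divisible by all of $x_{i1},\dots,x_{i\alpha_i}$'', would give the principal ideal $(\prod_{i\in F}y_i)$, not $(y_i:i\in F)$. The correct criterion is ``for \emph{some} $i\in F$'': if no such $i$ existed you could choose, for each $i\in F$, an index $j_i$ with $x_{i,j_i}\nmid m$, and then $m$ would miss the prime $(x_{i,j_i}:i\in F)$. Your stated conclusion $(y_i:i\in F)$ is nonetheless the right one, so this is a wording error rather than a gap in the argument.
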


This implies that $\reg(I(\Delta))=\reg(I(\Delta^{\alpha}))$. Moreover, $I(\Delta)$ has a linear resolution if and only if $I(\Delta^\alpha)$ has a linear resolution.
In Proposition \ref{pdlinear}, we prove that if the facet ideal $I(\Delta)$ has linear quotients and $\alpha=(s,s,\ldots,s)$,
then $\pd(I(\Delta^\alpha))\leq \pd(I(\Delta))s+(d+1)(s-1)$, where $d=\dim(\Delta)$. Moreover, if $\Delta$ is pure, then $$\pd(I(\Delta^\alpha))=\pd(I(\Delta))s+(d+1)(s-1).$$
In Section 3, we consider the case that $\Delta=G$ is a graph. In Theorem \ref{graph}, we find some properties that are equivalent in $G$ and its expansions. We show that for a Cohen-Macaulay (resp. sequentially Cohen-Macaulay and shellable) graph $G$ and a vertex $x\in V(G)$, if we add a new vertex $x'$ to $G$  and connect it to $x$ and all of its neighbours, then the new graph is again Cohen-Macaulay (resp. sequentially Cohen-Macaulay and shellable). Also we give a condition so that by removing a vertex $x$ from $G$, the graph $G\setminus x$ is still  Cohen-Macaulay (resp. sequentially Cohen-Macaulay and shellable) (see Corollaries \ref{G1}, \ref{G2}).

\section{Preliminaries}

Throughout this paper, we assume that $\Delta$ is a simplicial complex on the vertex set $X=\{x_1, \dots, x_n\}$,
$K$ is a field and  $S=K[X]$ is a polynomial ring.
The set of facets (maximal faces) of $\Delta$
is denoted by $\mathcal{F}(\Delta)$ and if $\mathcal{F}(\Delta)=\{F_1,\ldots,F_r\}$, we write $\Delta=\langle F_1,\ldots,F_r\rangle$. For a monomial ideal $I$ of $S$, the set of minimal generators of $I$ is denoted by $\mathcal{G}(I)$.
For $\alpha=(s_1,\ldots,s_n)\in \mathbb{N}^n$, we set $X^{\alpha}=\{x_{11},\ldots,x_{1s_1},\ldots,x_{n1},\ldots,x_{ns_n}\}$ and $S^\alpha=K[X^{\alpha}].$

The concept of expansion of a simplicial complex was defined in \cite{KhMo} as follows.

\begin{definition}
Let $\Delta$ be a simplicial complex on $X$, $\alpha=(s_1,\ldots,s_n)\in \mathbb{N}^n$ and $F=\{x_{i_1},\ldots ,x_{i_r}\}$  be a facet of $\Delta$. The \textbf{expansion} of the simplex $\langle F\rangle$ with respect to $\alpha$ is denoted by $\langle F\rangle^\alpha$ and is defined as a simplicial complex on the vertex set $\{x_{i_lt_l}:1\leq l\leq r,\ 1\leq t_l\leq s_{i_l}\}$ with facets $$\{\{x_{i_1j_1},\ldots, x_{i_rj_r}\}: 1\leq j_m\leq s_{i_m}\}.$$
The expansion of $\Delta$ with respect to $\alpha$ is defined as $$\Delta^\alpha=\bigcup_{F\in\Delta}\langle F\rangle^\alpha.$$
A simplicial complex obtained by an expansion, is called an expanded complex.
\end{definition}

\begin{definition}\label{1.2}
{\rm
A monomial ideal $I$ in the ring $S$ has \textbf{linear quotients} if there exists an ordering $f_1, \dots, f_m$ on the minimal generators of $I$ such that the colon ideal $(f_1,\ldots,f_{i-1}):(f_i)$ is generated by a subset of $\{x_1,\ldots,x_n\}$ for all $2\leq i\leq m$. We show this ordering by $f_1<\dots <f_m$ and we call it an order of linear quotients on $\mathcal{G}(I)$.

Let $I$ be a monomial ideal which has linear quotients and $f_1<\dots <f_m$ be an order of linear quotients on the minimal generators of $I$. For any $1\leq i\leq m$, $\set_I(f_i)$ is defined as
$$\set_I(f_i)=\{x_k:\ x_k\in (f_1,\ldots, f_{i-1}) : (f_i)\}.$$
}
\end{definition}

For a $\mathbb{Z}$-graded $S$-module $M$, the \textbf{Castelnuovo-Mumford regularity} (or briefly regularity)
of $M$ is defined as
$$\reg(M) = \max\{j-i: \ \beta_{i,j}(M)\neq 0\},$$
and the \textbf{projective dimension} of $M$ is defined as
$$\pd(M) = \max\{i:\ \beta_{i,j}(M)\neq 0 \ \text{for some}\ j\},$$
where $\beta_{i,j}(M)$ is the $(i,j)$th graded Betti number of $M$.

For a simplicial complex $\Delta$ with the vertex set $X$, the \textbf{Alexander dual simplicial complex} associated to $\Delta$ is defined as
$$\Delta^{\vee}=:\{X\setminus F:\ F\notin \Delta\}.$$

For a squarefree monomial ideal $I=( x_{11}\cdots
x_{1n_1},\ldots,x_{t1}\cdots x_{tn_t})$, the \textbf{Alexander dual ideal} of $I$, denoted by
$I^{\vee}$, is defined as
$$I^{\vee}:=(x_{11},\ldots, x_{1n_1})\cap \cdots \cap (x_{t1},\ldots, x_{tn_t}).$$
For a subset $C\subseteq X$, by $x^C$ we mean the monomial $\prod_{x\in C} x$.
One can see that
$$(I_{\Delta})^{\vee}=(x^{F^c} \ : \ F\in \mathcal{F}(\Delta)), $$
where $I_{\Delta}$ is the Stanley-Reisner ideal associated to $\Delta$ and $F^c=X\setminus F$.
Moreover, $(I_{\Delta})^{\vee}=I_{\Delta^{\vee}}$.

A simplicial complex $\Delta$ is called Cohen-Macaulay (resp. sequentially Cohen-Macaulay, Buchsbaum and Gorenstein), if its the Stanley Reisner ring $K[\Delta]=S/I_{\Delta}$ is Cohen-Macaulay (resp. sequentially Cohen-Macaulay, Buchsbaum and Gorenstein).
For a graph $G$, with the vertex set $V(G)$ and the edge set $E(G)$, the \textbf{independence complex} of $G$ is defined as $$\Delta_G=\{F\subseteq V(G):\ e\nsubseteq F,\ \forall e\in E(G)\}.$$
The graph $G$  is called Cohen-Macaulay (resp. sequentially Cohen-Macaulay and shellable) if $\Delta_G$ is Cohen-Macaulay (resp. sequentially Cohen-Macaulay and shellable).

For a simplicial complex $\Delta$, the facet ideal of $\Delta$ is defined as $I(\Delta)=(x^F:\ F\in \mathcal{F}(\Delta))$.
Also the complement of $\Delta$ is the simplicial complex $\Delta^c=\langle F^c:\ F\in \mathcal{F}(\Delta)\rangle$. In fact $I(\Delta^c)=I_{\Delta^{\vee}}$.

\section{On the facet ideal of an expanded complex}

This section is devoted to the study of facet ideal of an expanded complex and its Alexander dual to see how their algebraic properties change via the expansion functor.
Set $J_\Delta:=I_{\Delta^c}$. Then one can see that $(J_\Delta)^\vee=I(\Delta)$ and $J_\Delta=\bigcap_{F\in \mathcal{F}(\Delta)}P_F$, where $P_F=(x_i:\ x_i\in F)$.

For any $1\leq i\leq n$, let $\delta_i=(a_1,\ldots,a_n)\in \mathbb{N}^n$, where the components are defined as
$a_j=\left\{
\begin{array}{ll}
0  & \hbox{if}\ j\neq i \\
1 & \hbox{if}\ j=i.
\end{array}
\right. $
Also let $\mathbf{1}=(1,1,\ldots,1)\in \mathbb{N}^n$.

\begin{lemma}\label{epsilon}
Let $\Delta$ be a simplicial complex on $X$, $\beta=(k_1,\ldots,k_n)\in \mathbb{N}^n$ and $\alpha=\beta+\delta_i$. Then $(\Delta^\beta)^{\mathbf{1}+\delta_{ik_i}}\cong \Delta^\alpha$.
\end{lemma}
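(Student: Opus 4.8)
The plan is to show that the two expanded complexes $(\Delta^\beta)^{\mathbf{1}+\delta_{ik_i}}$ and $\Delta^\alpha$ are isomorphic by exhibiting an explicit bijection on vertex sets that carries facets to facets. First I would unwind the definitions on both sides. On the right, $\Delta^\alpha$ has vertex set $\{x_{jt}: 1\le j\le n,\ 1\le t\le (\alpha)_j\}$, and since $\alpha=\beta+\delta_i$ we have $(\alpha)_j=k_j$ for $j\ne i$ and $(\alpha)_i=k_i+1$; its facets are $\{x_{1j_1},\dots,x_{rj_r}\}$ obtained from facets $\{x_{i_1},\dots,x_{i_r}\}$ of $\Delta$ by letting each index $j_m$ run over $1,\dots,(\alpha)_{i_m}$. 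On the left, $\Delta^\beta$ has vertex set indexed by $(\beta)_j=k_j$, and we then expand once more with respect to $\mathbf{1}+\delta_{ik_i}\in\mathbb N^N$ (where $N=\sum_j k_j$ is the number of vertices of $\Delta^\beta$): this duplicates exactly the vertex $x_{ik_i}$ into two copies, say $x_{ik_i,1}$ and $x_{ik_i,2}$, and leaves every other vertex of $\Delta^\beta$ alone (each gets a single trivial copy).

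Next I would define the bijection $\varphi$ between vertex sets. For a vertex $x_{jt}$ of $\Delta^\beta$ with $(j,t)\ne(i,k_i)$, send its unique copy to $x_{jt}\in\Delta^\alpha$. For the two copies of $x_{ik_i}$, send $x_{ik_i,1}\mapsto x_{ik_i}$ and $x_{ik_i,2}\mapsto x_{i,k_i+1}$. This is clearly a bijection, since on the $\Delta^\alpha$ side the vertices with second index $\le k_i$ (for the $i$-th block) together with the single extra vertex $x_{i,k_i+1}$ are precisely matched. Then I would check that $\varphi$ induces a bijection on facets. A facet of $(\Delta^\beta)^{\mathbf{1}+\delta_{ik_i}}$ is obtained from a facet $G$ of $\Delta^\beta$ by, if $x_{ik_i}\in G$, replacing it by one of its two copies (and otherwise leaving $G$ unchanged); and a facet $G$ of $\Delta^\beta$ is itself obtained from a facet $F=\{x_{i_1},\dots,x_{i_r}\}$ of $\Delta$ by choosing second indices $j_m\in\{1,\dots,k_{i_m}\}$. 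Tracing through, $\varphi$ of such a facet is exactly a facet of $\Delta^\alpha$: the block index $i$ now contributes a second index in $\{1,\dots,k_i+1\}$ (the value $k_i+1$ arising precisely when the second copy of $x_{ik_i}$ was chosen), and all other blocks are untouched. Conversely every facet of $\Delta^\alpha$ arises this way, so $\varphi$ restricts to a bijection $\mathcal F((\Delta^\beta)^{\mathbf{1}+\delta_{ik_i}})\to\mathcal F(\Delta^\alpha)$, proving the isomorphism.

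I expect the only real subtlety to be bookkeeping: keeping the three layers of indices straight (the original vertex $x_j$ of $\Delta$, its copy $x_{jt}$ in $\Delta^\beta$, and the copy-of-a-copy in the second expansion), and correctly identifying which single vertex of $\Delta^\beta$ is duplicated by $\mathbf{1}+\delta_{ik_i}$ — namely the one sitting in position $k_i$ of the $i$-th block, which is vertex number $k_1+\cdots+k_{i-1}+k_i$ in the linear ordering of $X^\beta$. Once the indexing convention is pinned down, the verification that facets correspond is a direct unravelling of the definition of the expansion functor, with no substantive combpinatorial difficulty. It may also be worth remarking that this lemma is the inductive step that lets one reduce statements about a general expansion $\Delta^\alpha$ to the case of a single one-step expansion $\Delta^{\mathbf 1+\delta_{ik}}$, which is presumably how it will be used in the sequel.
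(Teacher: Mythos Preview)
Your proposal is correct and follows essentially the same approach as the paper: both define the vertex bijection $\varphi$ sending the unique copy of each $x_{jt}$ to itself and the two copies of $x_{ik_i}$ to $x_{ik_i}$ and $x_{i,k_i+1}$, then observe that this induces an isomorphism of simplicial complexes. You are in fact more explicit than the paper in verifying that $\varphi$ carries facets bijectively to facets; the paper simply writes down $\varphi$ and asserts that the induced simplicial map is an isomorphism.
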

\begin{proof}
Note that
$$X^\alpha=\{x_{11},\ldots,x_{1k_1},\ldots,x_{i1},\ldots,x_{ik_i},x_{i(k_i+1)},\ldots,x_{n1},\ldots,x_{nk_n}\}$$
and
$$(X^\beta)^{\mathbf{1}+\delta_{ik_i}}=\{x_{111},\ldots,x_{1k_11},\ldots,x_{i11},\ldots,x_{ik_i1},x_{ik_i2},\ldots,x_{n11},\ldots,x_{nk_n1}\}.$$

Define $\varphi:(X^\beta)^{\mathbf{1}+\delta_{ik_i}}\rightarrow X^\alpha$ given by
$$\varphi(x_{rst})=\left\{
  \begin{array}{ll}
    x_{rs} & t=1 \\
    x_{i(k_i+1)} & t=2.
  \end{array}
\right.$$

Then $\varphi$ induces the simplicial map

$$\begin{array}{cr}
    \theta: & (\Delta^\beta)^{\mathbf{1}+\delta_{ik_i}}\rightarrow \Delta^\alpha. \\
     & F\mapsto \varphi(F)
  \end{array}$$
which is an isomorphism.
\end{proof}

The following lemma explains the generators of $J_{\Delta^\alpha}$ in terms of the generators of $J_{\Delta}$.
\begin{lemma}\label{J}
Let $\Delta$ be a simplicial complex and let $\alpha=(k_1,\ldots,k_n)\in \mathbb{N}^n$. Then
$$J_{\Delta^\alpha}=(\{\prod^{k_{i_1}}_{j=1}x_{i_1j}\cdots \prod^{k_{i_r}}_{j=1}x_{i_rj}:x_{i_1}\cdots x_{i_r}\in J_{\Delta}\}).$$
\end{lemma}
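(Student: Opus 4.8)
The plan is to reduce to the case of a single-step expansion and then iterate using Lemma \ref{epsilon}. Recall that $J_\Delta = \bigcap_{F\in\mathcal{F}(\Delta)} P_F$ where $P_F=(x_i:x_i\in F)$, so equivalently $J_\Delta$ is the Stanley--Reisner ideal of $\Delta^c$, and a squarefree monomial $x_{i_1}\cdots x_{i_r}$ lies in $J_\Delta$ precisely when $\{x_{i_1},\ldots,x_{i_r}\}$ meets every facet of $\Delta$ (a "vertex cover" of $\mathcal{F}(\Delta)$), and is a generator of $J_\Delta$ when it is a minimal such set. So the statement to prove is: the minimal vertex covers of $\mathcal{F}(\Delta^\alpha)$ are exactly the sets of the form $\bigcup_{l=1}^{r}\{x_{i_l1},\ldots,x_{i_lk_{i_l}}\}$ where $\{x_{i_1},\ldots,x_{i_r}\}$ is a minimal vertex cover of $\mathcal{F}(\Delta)$.

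First I would handle one elementary expansion. By Lemma \ref{epsilon}, writing $\alpha = \beta+\delta_i$ reduces the general claim to understanding how $J$ changes when a single entry of the exponent vector increases by one, i.e. when one vertex $x_i$ is "doubled" into $x_{i1},\ldots,x_{ik_i},x_{i(k_i+1)}$. In this elementary step, the facets of the expanded complex are obtained from those of $\Delta^\beta$ by the standard rule: a facet $G$ not containing any copy of $x_i$ is unchanged, while a facet $G$ containing a copy $x_{ij}$ of $x_i$ gives rise to the facets $(G\setminus\{x_{ij}\})\cup\{x_{it}\}$ for each available $t$. The key observation is then: a set $C$ of vertices of the expanded complex is a (minimal) vertex cover if and only if $C$ either contains none of the copies of $x_i$, in which case it must already cover the corresponding complex without $x_i$, or contains all copies of $x_i$, because the facets through the various copies $x_{it}$ are "parallel" and a cover missing even one copy $x_{it_0}$ fails to cover some facet through $x_{it_0}$ unless it covers that facet at another vertex — but then that other vertex already covers all the parallel facets, and minimality forces $C$ to drop the $x_{it}$'s entirely. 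This dichotomy is exactly what produces, from a minimal cover of $\mathcal{F}(\Delta)$ using $x_i$, the minimal cover of $\mathcal{F}(\Delta^\alpha)$ that uses the full block of copies of $x_i$, and leaves covers not using $x_i$ intact.

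Having established the elementary step, I would conclude by induction on $|\alpha|=\sum k_j$. The base case $\alpha=\mathbf{1}$ is trivial since $\Delta^{\mathbf 1}=\Delta$. For the inductive step, write $\alpha=\beta+\delta_i$; by the induction hypothesis $J_{\Delta^\beta}$ has the asserted description in terms of $J_\Delta$, and by Lemma \ref{epsilon} together with the elementary step, $J_{\Delta^\alpha}=J_{(\Delta^\beta)^{\mathbf 1+\delta_{ik_i}}}$ is obtained from $J_{\Delta^\beta}$ by replacing the block for the $i$-th original vertex with one more copy; composing the two substitutions gives exactly the displayed formula. The main obstacle is making the elementary step fully rigorous — in particular, the minimality claim, namely that any minimal cover of $\mathcal{F}(\Delta^\alpha)$ either omits all copies of a doubled vertex or contains all of them; this requires a careful "exchange" argument on facets through parallel copies, and some bookkeeping to check that a minimal cover of $\mathcal{F}(\Delta)$ that happens to contain $x_i$ really does lift to a *minimal* cover after expansion (no proper subset of the enlarged block together with the rest can still cover). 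I expect the rest to be routine translation between the cover language and the monomial-generator language.
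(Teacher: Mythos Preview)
Your approach is correct and follows the same inductive skeleton as the paper: induct on $|\alpha|$, and use Lemma~\ref{epsilon} to reduce the step $\Delta^\beta\to\Delta^\alpha$ (with $\alpha=\beta+\delta_i$) to a single vertex-doubling. The only real difference is in how the elementary step is carried out. The paper works algebraically: for the doubling $\alpha=\mathbf 1+\delta_1$ it groups the primary components of $J_{\Delta^\alpha}=\bigcap_{F\in\mathcal F(\Delta^\alpha)}P_F$ according to whether the underlying facet contains $x_1$, uses the identity
\[
\bigcap_{F\in\mathcal F(\langle F_i\rangle^\alpha)}P_F=(x_{11}x_{12})+P_{F_i\setminus x_1},
\]
and then distributes. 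You instead translate into the language of minimal vertex covers and argue the ``all-or-nothing'' dichotomy for the copies of $x_i$ via parallel facets. These are two phrasings of the same computation.

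Two remarks. First, your description of the elementary step is slightly tangled: via Lemma~\ref{epsilon} the passage $\Delta^\beta\to\Delta^\alpha$ doubles the single vertex $x_{ik_i}$ of $\Delta^\beta$, not the whole block $x_{i1},\ldots,x_{ik_i}$; your facet description and the dichotomy you state are really about $\Delta^\alpha$ itself rather than about that one-vertex doubling. This does not break the argument, but it means you are mixing the direct claim with the inductive step. Second, and related, your parallel-facet argument (if $x_{ij}\in C$ but $x_{ij'}\notin C$, then $C\setminus\{x_{ij}\}$ still covers) already proves the dichotomy for $\Delta^\alpha$ with arbitrary $\alpha$ in one stroke. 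So in your framework the induction and Lemma~\ref{epsilon} are actually dispensable: once you have the dichotomy, the bijection $C\leftrightarrow \bigcup_{x_i\in C}\{x_{i1},\ldots,x_{ik_i}\}$ between minimal covers of $\mathcal F(\Delta)$ and of $\mathcal F(\Delta^\alpha)$ follows immediately, and with it the lemma. That is a modest simplification your approach buys over the paper's.
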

\begin{proof}
It suffices to show $``\subseteq$''. We use induction on $\alpha$.

Let $\alpha=\mathbf{1}+\delta_1$. When $\Delta=\langle \{x_1,\ldots, x_n\}\rangle$ then $J_{\Delta^\alpha}=\bigcap_{F\in \mathcal{F}(\Delta^\alpha)}P_F=(x_{11}x_{12},x_2,\ldots, x_n)$ and the assertion holds. Let $\Delta=\langle F_1,\ldots,F_s\rangle$ and $x_1\in F_i$ for $i=1,\ldots,r$, and $x_1\not\in F_i$ for $i=r+1,\ldots,s$. Then
$$\begin{array}{rl}
  J_{\Delta^\alpha}= & [\bigcap_{F\in \mathcal{F}(\langle F_1\rangle^\alpha)}P_F]\cap \cdots\cap [\bigcap_{F\in \mathcal{F}(\langle F_s\rangle^\alpha)}P_F] \\
  = & [(x_{11}x_{12})+P_{F_1\backslash x_1}]\cap\cdots\cap[(x_{11}x_{12})+P_{F_r\backslash x_1}]\cap [\bigcap^s_{i=r+1}P_{F_i}]\\
  = & [(x_{11}x_{12})+\bigcap^r_{i=1}P_{F_i\backslash x_1}]\cap[\bigcap^s_{i=r+1}P_{F_i}]\\
  =& (x_{i_11}\cdots x_{i_t1}:x_{i_1}\cdots x_{i_t}\in J_\Delta, i_l\neq 1\ \mbox{for all}\ l)+\\
  & (x_{11}x_{12}x_{i_11}\cdots x_{i_{t'}1}:x_1x_{i_1}\cdots x_{i_{t'}}\in J_\Delta).
\end{array}$$

Suppose that $\alpha=(k_1,\ldots,k_n)\in \mathbb{N}^n$ is arbitrary with $k_1>1$ and let $\alpha=\beta+\delta_1$. By the induction hypothesis,
$$\begin{array}{rl}
  J_{\Delta^\beta}= & (\prod^{k_{i_1}}_{j=1}x_{i_1j}\cdots \prod^{k_{i_t}}_{j=1}x_{i_tj}:x_{i_1}\cdots x_{i_t}\in J_\Delta, i_l\neq 1\ \mbox{for all}\ l)+ \\
   &  (\prod^{k_1-1}_{j=1}x_{1j}\prod^{k_{i_1}}_{j=1}x_{i_1j}\cdots \prod^{k_{i_{t'}}}_{j=1}x_{i_{t'}j}:x_1x_{i_1}\cdots x_{i_{t'}}\in J_\Delta),
\end{array}$$
and so
$$\begin{array}{rl}
    J_{(\Delta^\beta)^{1+\delta_{1k_1}}}= & (\prod^{k_{i_1}}_{j=1}x_{i_1j1}\cdots \prod^{k_{i_t}}_{j=1}x_{i_tj1}:x_{i_1}\cdots x_{i_t}\in J_\Delta, i_l\neq 1\ \mbox{for all}\ l)+ \\
     &  (x_{1(k_1-1)2}\prod^{k_1-1}_{j=1}x_{1j1}\prod^{k_{i_1}}_{j=1}x_{i_1j1}\cdots \prod^{k_{i_{t'}}}_{j=1}x_{i_{t'}j1}:x_1x_{i_1}\cdots x_{i_{t'}}\in J_\Delta).
  \end{array}
$$
It follows from Lemma \ref{epsilon} that
$$J_{\Delta^\alpha}=(\{\prod^{k_{i_1}}_{j=1}x_{i_1j}\cdots \prod^{k_{i_r}}_{j=1}x_{i_rj}:x_{i_1}\cdots x_{i_r}\in J_{\Delta}\}).$$
\end{proof}

To prove Proposition \ref{facet}, we use the following Proposition.
\begin{proposition}\label{Hart}
(\cite[Proposition 1]{Ha}) Let $R$ be a Noetherian local ring containing a field $K$, and $u_1,\ldots,u_n$ be an $R$-sequence. Then the natural map
$$\begin{array}{cr}
    \varphi: & S=K[x_1,\ldots,x_n]\rightarrow R \\
     &x_i\mapsto u_i
  \end{array}
$$
of $K$-algebras is injective and $R$ is a flat $S$-module.
\end{proposition}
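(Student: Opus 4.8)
The plan is to treat the two assertions separately, after first reducing to a genuinely local base ring. Write $\mathfrak{m}$ for the maximal ideal of $R$ and put $I=(u_1,\dots,u_n)R$. Since $u_1,\dots,u_n$ is an $R$-sequence we have $I\ne R$, hence $I\subseteq\mathfrak{m}$; in particular each $\varphi(x_i)=u_i$ lies in $\mathfrak{m}$, and since $\mathfrak{n}:=(x_1,\dots,x_n)$ is a maximal ideal of $S$ contained in the proper ideal $\varphi^{-1}(\mathfrak{m})$, we get $\varphi^{-1}(\mathfrak{m})=\mathfrak{n}$. Moreover the composite $K\hookrightarrow R\twoheadrightarrow R/\mathfrak{m}$ is a ring homomorphism out of a field, hence injective, so $K\cap\mathfrak{m}=0$ and therefore $K\cap I=0$.

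For injectivity I would run a leading-term argument resting on the classical fact that, for a regular sequence $u_1,\dots,u_n$, the canonical surjection $(R/I)[T_1,\dots,T_n]\twoheadrightarrow\mathrm{gr}_I(R)=\bigoplus_{k\ge 0}I^k/I^{k+1}$ sending $T_i$ to the class of $u_i$ is an isomorphism. Suppose $0\ne f\in\ker\varphi$ and let $f=f_d+f_{d+1}+\dots+f_e$ be the decomposition of $f$ into homogeneous components with $f_d\ne 0$. Each $f_j$ is homogeneous of degree $j$, so $\varphi(f_j)\in I^j$, and from $\varphi(f)=0$ we obtain $\varphi(f_d)=-(\varphi(f_{d+1})+\dots+\varphi(f_e))\in I^{d+1}$; equivalently, the class of $\varphi(f_d)$ in $\mathrm{gr}_I(R)_d=I^d/I^{d+1}$ is zero. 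Under the isomorphism above this class corresponds to the element of $(R/I)[T_1,\dots,T_n]$ obtained from $f_d$ by reducing its coefficients modulo $I$; since those coefficients are nonzero elements of $K$ and $K\cap I=0$, that element is nonzero, a contradiction. Hence $\ker\varphi=0$.

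For flatness the crucial computation is with the Koszul complex. Put $A=S_{\mathfrak{n}}$, a regular local ring with maximal ideal $\mathfrak{n}A$ and residue field $K$; since every $g\in S\setminus\mathfrak{n}$ has nonzero constant term, $\varphi(g)$ has nonzero image in $R/\mathfrak{m}$ and is therefore a unit in $R$, so the $S$-module structure on $R$ factors through $A$ and $A\to R$ is a local homomorphism of Noetherian local rings. The Koszul complex $\mathbb{K}_\bullet(x_1,\dots,x_n;A)$ is a free resolution of $A/\mathfrak{n}A=K$ over $A$ (as $x_1,\dots,x_n$ is a regular system of parameters of $A$), and tensoring it with $R$ over $A$ produces $\mathbb{K}_\bullet(u_1,\dots,u_n;R)$ (the differentials now involve $\varphi(x_i)=u_i$); hence $\mathrm{Tor}^A_i(K,R)\cong H_i(\mathbb{K}_\bullet(u_1,\dots,u_n;R))$, which vanishes for $i\ge 1$ because $u_1,\dots,u_n$ is an $R$-sequence. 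In particular $\mathrm{Tor}^A_1(K,R)=0$, so the local criterion for flatness — applicable since $A\to R$ is a local homomorphism of Noetherian local rings and $R$ is a finitely generated $R$-module — yields that $R$ is flat over $A$. Finally, since $S\to A=S_{\mathfrak{n}}$ is flat and the $S$-action on $R$ factors through $A$, it follows that $R$ is flat over $S$.

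The two places where something genuinely nontrivial is used are the identification of $\mathrm{gr}_I(R)$ with a polynomial ring over $R/I$ (this is exactly what converts the hypothesis that $u_1,\dots,u_n$ is an $R$-sequence into the leading-term argument for injectivity) and the local criterion for flatness (which is what makes the localization at $\mathfrak{n}$ necessary, so that vanishing of $\mathrm{Tor}_1$ against the residue field can be upgraded to honest flatness). Apart from these, the argument is a routine manipulation of Koszul complexes and of localizations; I expect this interplay, rather than any single computation, to be the real content.
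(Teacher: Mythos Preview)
The paper does not give its own proof of this proposition; it is simply quoted from \cite{Ha} and used as a black box in the proof of Proposition~\ref{facet}. Your argument is correct and is essentially the classical one: injectivity via Rees' theorem identifying $\mathrm{gr}_I(R)$ with $(R/I)[T_1,\dots,T_n]$ for a regular sequence, and flatness via the Koszul computation $\mathrm{Tor}^{S_{\mathfrak n}}_i(K,R)\cong H_i(\mathbb K_\bullet(u_1,\dots,u_n;R))=0$ combined with the local criterion for flatness, then descending from $S_{\mathfrak n}$ to $S$ by transitivity of flatness.
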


\begin{proposition}\label{facet}
Let $\Delta$ be a simplicial complex and $\alpha=(k_1,\ldots,k_n)\in \mathbb{N}^n$. Then
\begin{enumerate}[\upshape (i)]
  \item $\beta_i(S/J_\Delta)=\beta_i(S^\alpha/J_{\Delta^\alpha})$;
  \item $S/J_\Delta$ is Cohen-Macaulay if and only if $S^\alpha/J_{\Delta^\alpha}$ is Cohen-Macaulay.
\end{enumerate}
\end{proposition}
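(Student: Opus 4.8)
The plan is to reduce the general case to a single elementary expansion $\alpha = \mathbf{1} + \delta_i$ and then apply a flatness argument. By Lemma \ref{epsilon} and induction, it suffices to prove both statements when $\Delta^\alpha = (\Delta^\beta)^{\mathbf{1}+\delta_{ik_i}}$ is obtained from $\Delta^\beta$ by splitting one vertex into two. So, replacing $\Delta$ by $\Delta^\beta$ and relabelling, we may assume $\alpha = \mathbf{1}+\delta_1$, so that the new ring $S^\alpha = K[x_{11},x_{12},x_2,\ldots,x_n]$ has exactly one more variable than $S = K[x_1,\ldots,x_n]$, and by Lemma \ref{J} the ideal $J_{\Delta^\alpha}$ is obtained from $J_\Delta$ by replacing every occurrence of $x_1$ in a generator by the product $x_{11}x_{12}$.

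The key step is to set up the right flat map. Consider the polynomial ring $S^\alpha$ and the sequence of elements $u_1 = x_{11}x_{12},\, u_2 = x_2,\, \ldots,\, u_n = x_n$ in the localization $R = (S^\alpha)_{\mathfrak{m}}$ at the maximal graded ideal, or work directly with the graded map $\varphi : S \to S^\alpha$ sending $x_1 \mapsto x_{11}x_{12}$ and $x_i \mapsto x_i$ for $i \geq 2$. Since $x_{11}x_{12}, x_2, \ldots, x_n$ is a regular sequence in $S^\alpha$, Proposition \ref{Hart} (after passing to the local ring, or by a direct graded argument) gives that $\varphi$ is injective and $S^\alpha$ is flat, indeed free, as an $S$-module via $\varphi$ --- concretely $S^\alpha$ is a free $S$-module with basis $\{1, x_{11}\}$ (every monomial in $S^\alpha$ is uniquely $x_{11}^{\epsilon} \cdot (x_{11}x_{12})^a x_2^{b_2}\cdots$ with $\epsilon \in \{0,1\}$). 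By construction $J_\Delta S^\alpha = J_{\Delta^\alpha}$: the image of each generator $x_{i_1}\cdots x_{i_r}$ of $J_\Delta$ under $\varphi$ is precisely the corresponding generator of $J_{\Delta^\alpha}$ described in Lemma \ref{J}. Hence $S^\alpha / J_{\Delta^\alpha} \cong (S/J_\Delta) \otimes_S S^\alpha$, a base change along the flat (free) extension $S \hookrightarrow S^\alpha$.

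From here both conclusions are standard consequences of flat base change. For (i), since $S^\alpha$ is free over $S$, tensoring a minimal free $S$-resolution $F_\bullet$ of $S/J_\Delta$ with $S^\alpha$ yields a free resolution $F_\bullet \otimes_S S^\alpha$ of $S^\alpha/J_{\Delta^\alpha}$; one checks this stays minimal (the differentials have entries in the graded maximal ideal of $S$, which maps into that of $S^\alpha$), so $\beta_i(S^\alpha/J_{\Delta^\alpha}) = \beta_i(S/J_\Delta)$ for all $i$, and in fact the graded Betti numbers agree as well. For (ii), flat local base change preserves depth and dimension: one has $\operatorname{depth}_{S^\alpha} S^\alpha/J_{\Delta^\alpha} = \operatorname{depth}_S S/J_\Delta + \operatorname{depth}(S^\alpha/\mathfrak{m}S^\alpha) = \operatorname{depth}_S S/J_\Delta + 1$ and likewise $\dim S^\alpha/J_{\Delta^\alpha} = \dim S/J_\Delta + 1$ (the fiber $S^\alpha/\mathfrak{m}S^\alpha = K[x_{12}]$ has dimension $1$ and is Cohen-Macaulay), so the Cohen-Macaulay defect is unchanged.

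The main obstacle I anticipate is a bookkeeping one rather than a conceptual one: verifying cleanly that $\varphi(J_\Delta) S^\alpha = J_{\Delta^\alpha}$ with the exact labelling of Lemma \ref{J}, and confirming that the resolution obtained by base change is genuinely minimal so that \emph{all} total Betti numbers (not merely the projective dimension) are preserved. One also needs to be slightly careful invoking Proposition \ref{Hart}, which is stated for Noetherian \emph{local} rings: the passage between the graded ring $S^\alpha$ and its localization at the irrelevant maximal ideal must be handled so that the Betti number and Cohen-Macaulayness statements transfer back to the graded setting, but this is routine since all modules in sight are finitely generated and graded.
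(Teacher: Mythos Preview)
Your approach is essentially the paper's: make $S^\alpha$ into a flat $S$-module via $x_i\mapsto \prod_j x_{ij}$ using Proposition~\ref{Hart}, identify $J_{\Delta^\alpha}$ with the extension $J_\Delta S^\alpha$ via Lemma~\ref{J}, and read off both statements from flat base change. The only structural difference is that the paper does this in one stroke for arbitrary $\alpha$, whereas you first reduce via Lemma~\ref{epsilon} to a single elementary expansion; this reduction is correct but unnecessary, since the sequence $\prod_{j=1}^{k_1}x_{1j},\ldots,\prod_{j=1}^{k_n}x_{nj}$ is already regular in $S^\alpha$ for any $\alpha$.

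One small slip: your parenthetical claim that $S^\alpha$ is free over $S$ with basis $\{1,x_{11}\}$ is wrong. For instance $x_{12}$ and $x_{11}^2$ cannot be written as $x_{11}^\epsilon(x_{11}x_{12})^a\cdots$ with $\epsilon\in\{0,1\}$; a correct basis in the case $\alpha=\mathbf{1}+\delta_1$ is $\{x_{11}^k:k\geq 0\}\cup\{x_{12}^k:k\geq 1\}$, so $S^\alpha$ is free of infinite rank. This does not affect the argument, since flatness is supplied by Proposition~\ref{Hart} rather than by this description. Your remarks on minimality of the tensored resolution and on transferring the local statement of Proposition~\ref{Hart} to the graded setting are on point.
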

\begin{proof}
Define $\varphi:S\rightarrow S^\alpha$ given by $\varphi(x_i)=\prod^{k_i}_{j=1}x_{ij}$. Since $\prod^{k_1}_{j=1}x_{1j},\ldots,\prod^{k_n}_{j=1}x_{nj}$ is an $S^\alpha$-regular sequence, it follows from Proposition \ref{Hart} that $S^\alpha$ is a flat $S$-module. Now, by \cite[Theorem 2.1.7]{BrHe}, (ii) is concluded. Also, if $F_\bullet$ is a minimal free resolution of $S^\alpha/J_\Delta$ over $S$, then it follows that $F_\bullet\otimes_SS^\alpha$ is a minimal
free resolution of $S^\alpha/J_{\Delta^\alpha}$ over $S^\alpha$. Therefore we obtain (i).
\end{proof}

The following corollary shows that the regularity of the facet ideal does not change under the expansion functor.

\begin{corollary}\label{lres expan}
Let $\Delta$ be a simplicial complex and let $\alpha\in\mathbb{N}^n$. Then $\reg(I(\Delta))=\reg(I(\Delta^{\alpha}))$. Moreover, $I(\Delta)$ has a linear resolution if and only if $I(\Delta^\alpha)$ has a linear resolution.
\end{corollary}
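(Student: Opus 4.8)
The plan is to reduce everything to Proposition \ref{facet} via the standard Alexander duality dictionary between the Betti numbers of a squarefree monomial ideal and those of its dual. Recall that $J_\Delta = I_{\Delta^c}$ and $(J_\Delta)^\vee = I(\Delta)$, so $I(\Delta)$ and $J_\Delta$ are Alexander dual to each other, and likewise $I(\Delta^\alpha)$ and $J_{\Delta^\alpha}$. The key external input is the Terai formula relating the projective dimension of $S/J$ to the regularity of its Alexander dual: for a squarefree monomial ideal $I$ with dual $I^\vee$ one has $\reg(I) = \pd(S/I^\vee)$. Applying this with $I = I(\Delta)$ and $I^\vee = J_\Delta$ gives $\reg(I(\Delta)) = \pd(S/J_\Delta)$, and applying it in the expanded polynomial ring $S^\alpha$ with $I = I(\Delta^\alpha)$, $I^\vee = J_{\Delta^\alpha}$ gives $\reg(I(\Delta^\alpha)) = \pd(S^\alpha/J_{\Delta^\alpha})$.

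Now I would invoke Proposition \ref{facet}(i): since $\beta_i(S/J_\Delta) = \beta_i(S^\alpha/J_{\Delta^\alpha})$ for all $i$, in particular the two modules have the same projective dimension, $\pd(S/J_\Delta) = \pd(S^\alpha/J_{\Delta^\alpha})$. Stringing the three equalities together yields $\reg(I(\Delta)) = \pd(S/J_\Delta) = \pd(S^\alpha/J_{\Delta^\alpha}) = \reg(I(\Delta^\alpha))$, which is the first assertion. (Alternatively, one can avoid Terai's formula entirely: the proof of Proposition \ref{facet} exhibits a flat base change $S \to S^\alpha$ under which a minimal free resolution $F_\bullet$ of $S/J_\Delta$ becomes, after $-\otimes_S S^\alpha$, a minimal free resolution of $S^\alpha/J_{\Delta^\alpha}$; the same flat base change sends a minimal free resolution of $I(\Delta) = (J_\Delta)^\vee$ to one of $I(\Delta^\alpha)$ once one checks that $\varphi$ carries the dual generators correctly, but this requires tracking the internal degrees, which the map $\varphi(x_i) = \prod_j x_{ij}$ scales nonuniformly — so the Terai route is cleaner.)

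For the ``moreover'' clause, recall that a squarefree monomial ideal $I$ generated in a single degree $d$ has a linear resolution if and only if $\reg(I) = d$. The facet ideal $I(\Delta)$ is generated in a single degree exactly when $\Delta$ is pure, say of dimension $d-1$, and in that case $I(\Delta^\alpha)$ is also generated in a single degree: by Lemma \ref{J} each generator $x_{i_1}\cdots x_{i_d}$ of $I(\Delta)$ corresponds to the generator $\prod_{l} \prod_{j=1}^{k_{i_l}} x_{i_l j}$ of $I(\Delta^\alpha)$ of degree $k_{i_1} + \cdots + k_{i_d}$ — wait, this is not constant unless $\alpha$ is constant on the relevant coordinates. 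The correct statement is that having a linear resolution forces generation in one degree, so I would argue: if $I(\Delta)$ has a linear resolution it is generated in one degree, hence $\Delta$ is pure; then by the $\mathbb{N}^n$-graded structure the generators of $I(\Delta^\alpha)$ all have the same total degree only when needed — actually the cleanest route is to observe that $I$ has a linear resolution iff $I^\vee = J_\Delta$ is Cohen–Macaulay (the Eagon–Reiner theorem), then quote Proposition \ref{facet}(ii) directly: $S/J_\Delta$ is Cohen–Macaulay iff $S^\alpha/J_{\Delta^\alpha}$ is Cohen–Macaulay, hence $I(\Delta)$ has a linear resolution iff $I(\Delta^\alpha)$ does. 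The main obstacle is not the argument itself but citing the right form of Eagon–Reiner/Terai so that the single-degree generation hypothesis is handled correctly; I expect to phrase the ``moreover'' part entirely through Eagon–Reiner and part (ii), sidestepping any degree bookkeeping.
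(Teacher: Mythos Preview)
Your proposal is correct and follows essentially the same route as the paper: the regularity equality comes from Terai's formula $\reg(I)=\pd(S/I^\vee)$ combined with Proposition~\ref{facet}(i), and the ``moreover'' clause comes from Eagon--Reiner combined with Proposition~\ref{facet}(ii), exactly the two citations the paper gives to \cite{Te}. The exploratory detours (the flat-base-change aside and the abortive degree count via Lemma~\ref{J}, which in any case concerns $J_{\Delta^\alpha}$ rather than $I(\Delta^\alpha)$) should simply be deleted in a clean write-up.
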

\begin{proof}
It is a consequence of Proposition \ref{facet}, \cite[Theorem 2.1,Corollary 1.6]{Te}.
\end{proof}

\begin{remark}
Bayati and Herzog in \cite{BaHe} defined the expansion functor in the category of finitely generated
multigraded $S$-modules. They showed that a finitely generated multigraded $S$-module has a linear resolution if and only if its expansion does, too (c.f. \cite[Corollary 4.3]{BaHe}). A special case of their result is the second part of Corollary \ref{lres expan}. Then in \cite{RaYa}, the authors studied the expansion of monomial ideals in the concept of Bayati and Herzog. They showed that a monomial ideal has linear quotients if and only if its expansion does (c.f. \cite[Theorem 1.7]{RaYa}). Also, a monomial ideal is weakly polymatroidal if and only if its expansion is (c.f. \cite[Theorem 1.4]{RaYa}). As a consequence of these results we have:

If $\Delta$ is a simplicial complex on $[n]$ and $\alpha\in\mathbb{N}^n$, then
\begin{itemize}
  \item $I(\Delta)$ has linear quotients if and only if $I(\Delta^\alpha)$ has linear quotients;
  \item $I(\Delta)$ is weakly polymatroidal if and only if $I(\Delta^\alpha)$ is weakly polymatroidal.
\end{itemize}
\end{remark}

We use the following theorem to compare the projective dimension of a facet ideal $I(\Delta)$ with linear quotients with the projective dimension of $I(\Delta^\alpha)$.

\begin{theorem}\label{Leila}
(\cite[Corollary 2.7]{ShVa}) Let $I$ be a monomial ideal with linear quotients with the ordering $f_1<\cdots<f_m$ on the minimal generators of $I$.
Then $$\beta_{i,j}(I)=\sum_{\deg(f_t)=j-i} {|\set_I(f_t)|\choose i}.$$
\end{theorem}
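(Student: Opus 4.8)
The plan is to produce an explicit minimal multigraded free resolution of $S/I$ by the iterated mapping cone construction of Herzog and Takayama, working with the partial ideals $I_t=(f_1,\dots,f_t)$, and then to pass to $I=I_m$ via $\beta_{i,j}(I)=\beta_{i+1,j}(S/I)$. The linear quotients hypothesis says exactly that $L_t:=(I_{t-1}):(f_t)=(\set_I(f_t))$ is generated by a set of variables, so for $2\le t\le m$ there is a short exact sequence of multigraded modules
\begin{equation*}
0\longrightarrow \bigl(S/L_t\bigr)(-\deg f_t)\xrightarrow{\ \cdot f_t\ } S/I_{t-1}\longrightarrow S/I_t\longrightarrow 0 .
\end{equation*}
Since $S/L_t$ is a complete intersection on the $|\set_I(f_t)|$ variables in $\set_I(f_t)$, it is resolved minimally by the Koszul complex on those variables; after the twist by $\deg f_t$, the $i$-th term of that complex is free of rank $\binom{|\set_I(f_t)|}{i}$, with a basis indexed by the size-$i$ subsets $\sigma\subseteq\set_I(f_t)$, the generator attached to $\sigma$ having multidegree $f_t\prod_{x\in\sigma}x$ and internal degree $\deg f_t+i$.

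First I would build, by induction on $t$, a multigraded free resolution $\mathbb{F}^{(t)}$ of $S/I_t$: take $\mathbb{F}^{(1)}\colon\ 0\to S(-\deg f_1)\to S$ to be the obvious free resolution of $S/I_1$, and let $\mathbb{F}^{(t)}$ be the mapping cone of a comparison chain map from the twisted Koszul complex above into $\mathbb{F}^{(t-1)}$ lifting multiplication by $f_t$. The usual mapping cone lemma makes $\mathbb{F}^{(t)}$ a free resolution of $S/I_t$, and an immediate induction shows that, for $i\ge 1$, a basis of $\mathbb{F}^{(t)}_i$ is given by the pairs $(t',\sigma)$ with $1\le t'\le t$ and $\sigma\subseteq\set_I(f_{t'})$, $|\sigma|=i-1$, where $(t',\sigma)$ has multidegree $f_{t'}\prod_{x\in\sigma}x$ and internal degree $\deg f_{t'}+i-1$.

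The decisive step — and the one I expect to be the main obstacle — is to prove that $\mathbb{F}:=\mathbb{F}^{(m)}$ is \emph{minimal}; this is precisely where linear quotients enter, and it is the substance of Herzog and Takayama's theorem on resolutions by mapping cones. Since every differential entry of $\mathbb{F}$ is a monomial, minimality means there is no nonzero constant entry. The Koszul differentials and, inductively, the differentials of $\mathbb{F}^{(t-1)}$ have all entries in the maximal ideal, so everything reduces to the comparison maps: a constant entry there would force a Koszul generator $(t,\sigma)$ and a generator $(s,\tau)$ of $\mathbb{F}^{(t-1)}$ with $s<t$ and $|\sigma|=|\tau|+1$ to carry the same multidegree, i.e.\ $f_t\prod_{x\in\sigma}x=f_s\prod_{x\in\tau}x$. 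When $\tau\subseteq\sigma$ this yields $f_t\mid f_s$ with $f_t\ne f_s$, impossible for a minimal system of generators; the remaining case is excluded (or the comparison map can be arranged to vanish there) by a finer divisibility analysis using $\sigma\subseteq\set_I(f_t)$, and this bookkeeping is the technical heart of the argument. Equivalently — and this is an alternative route avoiding explicit cones — the linear quotients hypothesis makes the long exact sequence in $\operatorname{Tor}$ attached to the displayed short exact sequence split into short exact sequences $0\to\operatorname{Tor}_i(S/I_{t-1},K)\to\operatorname{Tor}_i(S/I_t,K)\to\operatorname{Tor}_{i-1}\bigl((S/L_t)(-\deg f_t),K\bigr)\to 0$, which gives $\beta_{i,j}(S/I_t)=\beta_{i,j}(S/I_{t-1})+\binom{|\set_I(f_t)|}{i-1}$ whenever $j=\deg f_t+i-1$, and the conclusion then follows by induction on $t$.

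Granting minimality, the theorem reduces to a count. For $i\ge 1$, $\beta_{i,j}(S/I)$ is the number of generators of $\mathbb{F}_i$ of internal degree $j$, namely the pairs $(t,\sigma)$ with $|\sigma|=i-1$ and $\deg f_t+i-1=j$; for each $t$ with $\deg f_t=j-i+1$ there are $\binom{|\set_I(f_t)|}{i-1}$ such $\sigma$, so $\beta_{i,j}(S/I)=\sum_{\deg f_t=j-i+1}\binom{|\set_I(f_t)|}{i-1}$. Substituting $i+1$ for $i$ and using $\beta_{i,j}(I)=\beta_{i+1,j}(S/I)$ gives
\begin{equation*}
\beta_{i,j}(I)=\sum_{\deg f_t=j-i}\binom{|\set_I(f_t)|}{i} ,
\end{equation*}
which is the assertion; the case $i=0$ simply records that $\beta_{0,j}(I)$ counts the minimal generators of $I$ of degree $j$, a useful sanity check.
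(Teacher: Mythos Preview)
The paper does not prove this statement at all: Theorem~\ref{Leila} is quoted verbatim from \cite[Corollary~2.7]{ShVa} and used as a black box in the proof of Proposition~\ref{pdlinear}. There is therefore no ``paper's own proof'' to compare your attempt against.

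That said, your sketch is the standard Herzog--Takayama argument and is essentially what one finds in the original sources. The structure is correct: the iterated mapping cone built from the short exact sequences $0\to (S/L_t)(-\deg f_t)\to S/I_{t-1}\to S/I_t\to 0$ gives a free resolution whose basis in homological degree $i\ge 1$ is exactly the set of pairs $(t,\sigma)$ with $\sigma\subseteq\set_I(f_t)$ and $|\sigma|=i-1$, and once minimality is known the count you give is immediate. You are also right to isolate minimality as the crux and to offer the two standard routes (explicit analysis of the comparison map, or vanishing of the induced maps $\operatorname{Tor}_i\bigl((S/L_t)(-\deg f_t),K\bigr)\to\operatorname{Tor}_i(S/I_{t-1},K)$). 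Your treatment of that step is honestly labeled as incomplete: the case $\tau\subseteq\sigma$ is dispatched correctly, but the ``remaining case'' and the claim that the long exact Tor sequence splits are precisely where the real work lies, and you defer both to the literature. For a self-contained proof you would need to finish that multidegree/divisibility argument; as a proof \emph{proposal} pointing to the right references, what you wrote is adequate.
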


\begin{proposition}\label{pdlinear}
If $I(\Delta)$ has linear quotients, $\alpha=(s,s,\ldots,s)$ and $d=\dim(\Delta)$, then $\pd(I(\Delta^\alpha))\leq \pd(I(\Delta))s+(d+1)(s-1)$. Moreover, if $\Delta$ is pure, then $$\pd(I(\Delta^\alpha))=\pd(I(\Delta))s+(d+1)(s-1).$$
\end{proposition}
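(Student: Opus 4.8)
The plan is to read off $\pd(I(\Delta^{\alpha}))$ directly from Theorem \ref{Leila}. Recall first that if a monomial ideal $I$ has linear quotients with respect to some order $f_1<\cdots<f_m$ on its minimal generators, then $\beta_i(I)=\sum_t\binom{|\set_I(f_t)|}{i}$, and hence $\pd(I)=\max_t|\set_I(f_t)|$; in particular this number does not depend on the chosen order of linear quotients. So it suffices to produce one order of linear quotients on $I(\Delta^{\alpha})$ and to compute the associated sets $\set_{I(\Delta^{\alpha})}$.

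Fix an order of linear quotients $x^{F_1}<\cdots<x^{F_m}$ on $I(\Delta)$. The minimal generators of $I(\Delta^{\alpha})$ are the monomials $x_{i_1j_1}\cdots x_{i_rj_r}$ with $F_t=\{x_{i_1},\dots,x_{i_r}\}\in\mathcal F(\Delta)$ and $(j_1,\dots,j_r)\in\{1,\dots,s\}^r$; call the generators arising from a fixed $F_t$ the $t$-th block. Order the generators of $I(\Delta^{\alpha})$ by letting the $t$-th block precede the $t'$-th block when $t<t'$, and inside the $t$-th block by the lexicographic order of the tuple $(j_1,\dots,j_r)$. I claim that for $v=x_{i_1j_1}\cdots x_{i_rj_r}$ in the $t$-th block,
$$(\,\text{generators preceding }v\,):(v)=\bigl(\{x_{i_p\ell}:1\le p\le r,\ \ell<j_p\}\cup\{x_{k\ell}:x_k\in\set_{I(\Delta)}(x^{F_t}),\ 1\le\ell\le s\}\bigr).$$
For the within-block part this is the familiar computation for a lexicographic order on a product $(x_{i_11},\dots,x_{i_1s})\cdots(x_{i_r1},\dots,x_{i_rs})$ of ideals generated by variables. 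For the earlier-block part: if $F_{t'}<F_t$ then $x^{F_{t'}}:x^{F_t}=x^{F_{t'}\setminus F_t}$ lies in the ideal generated by $\set_{I(\Delta)}(x^{F_t})$, so $F_{t'}\setminus F_t$ contains some $x_k\in\set_{I(\Delta)}(x^{F_t})$, and then the vertex over $x_k$ in any generator $z$ of the $t'$-th block divides $z:v$ and is one of the listed variables; conversely, for each $x_k\in\set_{I(\Delta)}(x^{F_t})$ there is a facet $F'<F_t$ with $x_k\in F'\subseteq F_t\cup\{x_k\}$, and the expansion of $F'$ obtained by keeping the second indices of $v$ on $F'\cap F_t$ and choosing $\ell$ on $x_k$ is an earlier generator whose colon against $v$ is exactly $x_{k\ell}$. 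Granting the displayed equality, the order is an order of linear quotients, the two sets on the right are disjoint (their variables have, respectively, first index in $F_t$ and first index not in $F_t$), and therefore $|\set_{I(\Delta^{\alpha})}(v)|=\sum_{p=1}^r(j_p-1)+s\,|\set_{I(\Delta)}(x^{F_t})|$.

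Maximising over $v$: for a fixed block the first summand is at most $|F_t|(s-1)$, with equality precisely when all $j_p=s$, so
$$\pd(I(\Delta^{\alpha}))=\max_{F\in\mathcal F(\Delta)}\Bigl(|F|(s-1)+s\,|\set_{I(\Delta)}(x^{F})|\Bigr).$$
Since $|F|\le d+1$ and $|\set_{I(\Delta)}(x^{F})|\le\pd(I(\Delta))$ for every facet $F$, the right-hand side is at most $\pd(I(\Delta))s+(d+1)(s-1)$, giving the inequality. If $\Delta$ is pure then $|F|=d+1$ for every facet, so the maximum equals $(d+1)(s-1)+s\max_F|\set_{I(\Delta)}(x^{F})|=(d+1)(s-1)+s\,\pd(I(\Delta))$, which is the claimed equality.

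The main obstacle is the displayed colon computation, and within it the contribution of the earlier blocks: one must check both that every such colon is absorbed into the listed variables and that every listed variable $x_{k\ell}$ with $x_k\in\set_{I(\Delta)}(x^{F_t})$ actually occurs. This is exactly where the hypothesis that $x^{F_1}<\cdots<x^{F_m}$ is an order of linear quotients on $I(\Delta)$ — not merely a generating set — is used, through the elementary fact that a monomial lying in a monomial ideal generated by variables is divisible by one of those variables. The remaining steps are routine bookkeeping with the lexicographic order inside a block.
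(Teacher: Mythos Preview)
Your proof is correct and follows essentially the same approach as the paper: both use the same block-then-lex order of linear quotients on $I(\Delta^{\alpha})$ (borrowed from \cite{RaYa}), identify $\set_{I(\Delta^{\alpha})}(v)$ as the union of the within-block variables $\{x_{i_p\ell}:\ell<j_p\}$ and the lifted variables $\{x_{k\ell}:x_k\in\set_{I(\Delta)}(x^{F_t})\}$, and then maximise using Theorem~\ref{Leila}. Your write-up is in fact more complete than the paper's, which asserts the description of $\set_{I(\Delta^{\alpha})}(v)$ without justification and only invokes disjointness of the two pieces in the pure case; you supply the colon computation in both directions and observe disjointness up front, yielding the sharper intermediate formula $\pd(I(\Delta^{\alpha}))=\max_{F\in\mathcal F(\Delta)}\bigl(|F|(s-1)+s\,|\set_{I(\Delta)}(x^{F})|\bigr)$.
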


\begin{proof}
Let $I(\Delta)$ has linear quotients. In view of the proof of \cite[Theorem 1.7]{RaYa}, consider an order on the minimal generators of $I(\Delta^{\alpha})$ as follows.

Fix an order of linear quotients for $I(\Delta)$. For two facets $F,F'\in \Delta^{\alpha}$, if $\overline{F}=\overline{F'}=\{x_{i_1},\ldots,x_{i_k}\}$, $F=\{x_{i_1r_1},\ldots,x_{i_kr_k}\}$ and $F'=\{x_{i_1r'_1},\ldots,x_{i_kr'_k}\}$, where $i_1<\cdots< i_k$, set $x^F<x^{F'}$ if and only if $(r_1,\ldots, r_{k})<_{lex} (r'_1,\ldots, r'_{k})$.
Otherwise set $x^F<x^{F'}$ if and only if $x^{\overline{F}}<x^{\overline{F'}}$ in the order of linear quotients for $I(\Delta)$.

$I(\Delta^\alpha)$ has linear quotients with respect to above order. In the light of Theorem \ref{Leila}, with this ordering, we have
$\pd(I(\Delta^{\alpha}))=\max\{|\set_{I(\Delta^{\alpha})}(x^F)|\ |\ F\in  \mathcal{F}(\Delta^\alpha)\}$.
One can see that for a minimal generator $x^F\in I(\Delta^{\alpha})$,
$$\set_{I(\Delta^\alpha)}(x^F)=\{x_{it_i}:\ \ x_i\in \set_{I(\Delta)}(x^{\overline{F}}),\ 1\leq t_i\leq s\}\cup \{x_{it_i}:\ x_{ir_i}\in F,\  t_i< r_i\}.$$  Thus for any $F\in \mathcal{F}(\Delta^{\alpha})$,
$$|\set_{I(\Delta^\alpha)}(x^F)|\leq |\set_{I(\Delta)}(x^{\overline{F}})|s+|F|(s-1)\leq \pd(I(\Delta))s+(d+1)(s-1).$$
Therefore
$\pd(I(\Delta^\alpha))\leq \pd(I(\Delta))s+(d+1)(s-1)$.

Now, assume that $\Delta$ is pure of dimension $d$ and let $\pd(I(\Delta))=|\set_{I(\Delta)}(x^{F})|$ for some $F\in \mathcal{F}(\Delta)$.
Let $F=\{x_{i_1},\ldots,x_{i_t}\}$. Then for $F'=\{x_{i_1s},\ldots,x_{i_ts}\}$ one has
$$|\set_{I(\Delta^\alpha)}(x^{F'})|=|\set_{I(\Delta)}(x^{F})|s+|F|(s-1)=\pd(I(\Delta))s+(d+1)(s-1),$$ noting the fact that
$\{x_{it_i}:\ x_i\in \set_{I(\Delta)}(x^{F}),\ 1\leq t_i\leq s\}\cap \{x_{it_i}| x_{ir_i}\in F',\  t_i< s\}=\emptyset$.
\end{proof}

\section{The expansion of graphs}
In this section, we consider the case when $\Delta=G$ is a graph and investigate some properties that are equivalent in $G$ and its expansion. We state conditions on a graph $G$ so that by adding a vertex to $G$ or removing a vertex from $G$, some properties of $G$ like Cohen-Macaulayness are preserved.

For a graph $G$ and a vertex $x\in V(G)$, let $N_G(x)=\{y\in V(G):\ \{x,y\}\in E(G)\}$ and $N_G[x]=N_G(x)\cup \{x\}$.

Let $G$ be a simple graph with the vertex set $X=\{x_1,\ldots,x_n\}$ and let $\alpha=(k_1,\ldots,k_n)\in\mathbb{N}^n$. The \emph{expansion of $G$ with respect to $\alpha$} is denoted by $G^\alpha$ and it is a simple graph with the vertex set $X^\alpha$ and the edge set
$$E(G^\alpha)=\{\{x_{ir},x_{js}\}:\ \ \{x_i,x_j\}\in E(G), 1\leq r\leq k_i,1\leq s\leq k_j\}.$$

In the following, the notion of a co-chordal (resp. co-shellable and co-Cohen-Macaulay) graph implies to a simple graph with chordal (resp. shellable and Cohen-Macaulay) complement.

\begin{theorem}\label{graph}
Let $G$ be a simple graph and $\alpha\in\mathbb{N}^n$.
\begin{enumerate}[\upshape (i)]
  \item $G$ is co-chordal if and only if $G^\alpha$ is;
  \item $G$ is co-shellable if and only if $G^\alpha$ is;
  \item $G$ is co-Cohen-Macaulay if and only if $G^\alpha$ is;
  \item $(\Delta_G)^{\vee}$ is vertex decomposable if and only if $(\Delta_{G^{\alpha}})^{\vee}$ is vertex decomposable.
\end{enumerate}
\end{theorem}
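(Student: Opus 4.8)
The overall strategy is to turn each of the four equivalences into an invariance statement for the expansion functor that is supplied either by Section 2 or by the results on expansions of Stanley--Reisner ideals recalled in the introduction. Everything hinges on two elementary observations. First, when $G$ is regarded as the one-dimensional complex $\Delta$ with $\mathcal{F}(\Delta)=E(G)$, the facets of $\Delta^{\alpha}$ are exactly the edges of $G^{\alpha}$, so $\Delta^{\alpha}=G^{\alpha}$ as simplicial complexes, whence $I(\Delta^{\alpha})=I(G^{\alpha})$ and $J_{\Delta^{\alpha}}=J_{G^{\alpha}}$. Second, because $G^{\alpha}$ has no edge inside a cluster, every clique of $G^{\alpha}$ meets each cluster in at most one vertex and has support a clique of $G$; since the faces of $\Delta_{\overline{G}}$ are precisely the cliques of $G$, this means that $\Delta_{\overline{G^{\alpha}}}$, the complex of cliques of $G^{\alpha}$, equals the expansion $(\Delta_{\overline{G}})^{\alpha}$.

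With these in hand, (i) follows at once: by a theorem of Fr\"{o}berg, $G$ is co-chordal if and only if $I(\Delta)=I(G)$ has a linear resolution, and by Corollary \ref{lres expan} this is equivalent to $I(\Delta^{\alpha})=I(G^{\alpha})$ having a linear resolution, i.e.\ to $G^{\alpha}$ being co-chordal. For (ii) and (iii), $G$ is co-shellable (resp.\ co-Cohen-Macaulay) exactly when $\Delta_{\overline{G}}$ is shellable (resp.\ Cohen-Macaulay); since $\Delta_{\overline{G^{\alpha}}}=(\Delta_{\overline{G}})^{\alpha}$, the conclusion is immediate from the fact that the expansion functor preserves shellability and Cohen-Macaulayness (see \cite{KhMo,RaMo}).

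Part (iv) is the one that does not fit this pattern, because $(\Delta_{G^{\alpha}})^{\vee}$ is not the expansion of $(\Delta_{G})^{\vee}$: by Lemma \ref{J} its Stanley--Reisner ideal $J_{G^{\alpha}}$ is obtained from $J_{G}$ by the substitution $x_{i}\mapsto\prod_{j}x_{ij}$, not by the Bayati--Herzog expansion, so neither \cite{RaMo} nor Proposition \ref{facet} speaks to vertex decomposability. Here I would induct using Lemma \ref{epsilon}, reducing to the single step in which $G'$ arises from $G$ by adjoining a false twin $x'$ of a vertex $x$ (so $N_{G'}(x')=N_{G}(x)$ and $x'\not\sim x$), and set $\Gamma=(\Delta_{G'})^{\vee}$. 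A short computation with vertex covers then shows that $\mathrm{link}_{\Gamma}(x')$ is exactly $(\Delta_{G})^{\vee}$ and that $\mathrm{del}_{\Gamma}(x')$ is the complex on $V(G)$ whose unique minimal non-face is $N_{G}(x)$, that is, the join of the boundary of a simplex with a simplex, which is vertex decomposable; moreover the facets of $\mathrm{link}_{\Gamma}(x')$ have one vertex fewer than those of $\mathrm{del}_{\Gamma}(x')$, so the shedding condition at $x'$ is satisfied. Hence, if $(\Delta_{G})^{\vee}$ is vertex decomposable then $x'$ is a shedding vertex of $\Gamma$ and $\Gamma$ is vertex decomposable; conversely, if $\Gamma$ is vertex decomposable then so is the link $\mathrm{link}_{\Gamma}(x')=(\Delta_{G})^{\vee}$, because vertex decomposability is inherited by every link. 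Iterating along a chain from $\mathbf{1}$ to $\alpha$ as furnished by Lemma \ref{epsilon} then yields (iv).

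The main obstacle is precisely part (iv): pinning down the link and the deletion of $x'$ in $\Gamma$ and, in particular, disposing of the degenerate cases --- $G$ edgeless, or $x$ isolated (in which case $\Gamma$ is a cone over $(\Delta_{G})^{\vee}$ and the equivalence is trivial), or ``ghost'' vertices lying in no face --- together with the appeal to the stability of vertex decomposability under passing to links. Parts (i)--(iii), by contrast, are little more than bookkeeping on top of Corollary \ref{lres expan} and the known preservation results for the expansion functor once the two identifications above are recorded.
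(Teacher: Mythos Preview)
Your treatment of (i)--(iii) is correct and essentially identical to the paper's: the identifications $I(G^{\alpha})=I(\Delta^{\alpha})$ and $\Delta_{\overline{G^{\alpha}}}=(\Delta_{\overline{G}})^{\alpha}$ are exactly what the authors use, followed by Corollary~\ref{lres expan} and Fr\"{o}berg for (i), and the preservation results from \cite{RaMo} for (ii)--(iii).

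For (iv) the paper does something much quicker than your induction: it simply combines (i) with \cite[Corollary~3.8]{MK}, which says that for a graph $G$ the complex $(\Delta_{G})^{\vee}$ is vertex decomposable if and only if $\overline{G}$ is chordal. Thus (iv) is immediately equivalent to (i), and no direct analysis of links and deletions is needed.

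Your alternative inductive argument for (iv) has a genuine gap. The identification $\mathrm{link}_{\Gamma}(x')=(\Delta_{G})^{\vee}$ is correct (both equal the complex of non-vertex-covers of $G$ on $V(G)$), and this already gives the implication ``$\Gamma$ vertex decomposable $\Rightarrow$ $(\Delta_{G})^{\vee}$ vertex decomposable'' via stability under links. But your description of $\mathrm{del}_{\Gamma}(x')$ is wrong. A subset $H\subseteq V(G)$ is a non-face of $\mathrm{del}_{\Gamma}(x')$ precisely when $H$ is a vertex cover of $G'$, i.e.\ when $H$ is a vertex cover of $G$ \emph{and} $N_{G}(x)\subseteq H$; so the minimal non-faces are the vertex covers of $G$ that contain $N_{G}(x)$ and are minimal with that property, not the single set $N_{G}(x)$. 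For a concrete counterexample take $G=P_{4}$ on $a,b,c,d$ with $x=a$: then $N_{G}(x)=\{b\}$, but $\{b\}$ is \emph{not} a vertex cover of $G'$ (it misses $cd$), hence $\{b\}$ is a face, not a non-face, of $\mathrm{del}_{\Gamma}(x')$. In particular $\mathrm{del}_{\Gamma}(x')$ is not the boundary-of-a-simplex join you describe, and the forward direction of your induction step collapses. The link/deletion route can presumably be repaired, but as written the key computation fails; the paper's route through (i) and \cite{MK} sidesteps all of this.
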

\begin{proof}

(i) In view of Corollary \ref{lres expan}, the edge ideal of a simple graph $G$ has a linear resolution if and only if the edge ideal of its expansion has a linear resolution. Combining this  with Fr\"{o}berg's result on edge ideals with a linear resolution (see \cite[Theorem 1]{Fro}), we get the assertion.

(ii), (iii) Considering the equalities $\Delta_{G^c}=\Delta(G)$ and $\Delta(G^\alpha)=\Delta(G)^\alpha$, the result follows from \cite[Theorem 2.4 , Corollary 2.15 ]{RaMo}.

(iv) follows from (i) and \cite[Corollary 3.8]{MK}.
\end{proof}

\begin{remark}\label{remexpan}
There is another notion for the expansion of a graph $G$ in the literature, which we denote it here by $\widehat{G^{\alpha}}$, to avoid the confusion with the above concept. For $\alpha=(k_1,\ldots,k_n)\in\mathbb{N}^n$,
$\widehat{G^\alpha}$  is a simple graph with the vertex set $X^\alpha$ and the edge set
$$E(\widehat{G^\alpha})=\{\{x_{ir},x_{js}\}:\ \{x_i,x_j\}\in E(G), 1\leq r\leq k_i,1\leq s\leq k_j\}\cup\{\{x_{ir},x_{is}\}:\ 1\leq i\leq n,\ r\neq s\}.$$
It is easy to see that $\Delta_{\widehat{G^\alpha}}=(\Delta_G)^{\alpha}$ (see for example \cite[Remark 2.4]{KhMo}).
\end{remark}

In view of the above remark, we conclude the following assertions.

\begin{corollary}\label{G1}
Let $G$ be a graph, $x\in V(G)$ and $G'$ be the graph obtained from $G$ by adding a new vertex $x'$ and connecting it to all vertices in $N_G[x]$. If $G$ is Cohen-Macaulay (resp. sequentially Cohen-Macaulay and shellable), then $G'$ is Cohen-Macaulay (resp. sequentially Cohen-Macaulay and shellable).
\end{corollary}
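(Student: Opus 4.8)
The plan is to reduce Corollary \ref{G1} to the expansion machinery of Theorem \ref{graph} together with Remark \ref{remexpan}. The key observation is that the graph $G'$ described in the statement --- obtained by adding one new vertex $x'$ adjacent to every vertex of $N_G[x]$ --- is precisely an expansion of $G$ in the sense of Remark \ref{remexpan}: if $x = x_i$, take $\alpha = \mathbf{1} + \delta_i \in \mathbb{N}^n$, so that $x_i$ is ``doubled'' into $x_{i1}, x_{i2}$ and every other vertex is left untouched. Then $\widehat{G^\alpha}$ has vertex set $V(G) \cup \{x_{i2}\}$, the edge $\{x_{i1}, x_{i2}\}$ is present, and $x_{i2}$ is joined to $x_{jr}$ exactly when $\{x_i, x_j\} \in E(G)$, i.e. to all of $N_G(x_i)$. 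Identifying $x_{i1}$ with $x_i$ and $x_{i2}$ with $x'$, we get $\widehat{G^\alpha} \cong G'$.

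First I would make this identification explicit, checking that the edge sets match: an edge of $\widehat{G^\alpha}$ is either an edge among the ``old'' copies (which recovers $E(G)$), an edge involving $x_{i2}$ of the first type (giving $\{x', y\}$ for $y \in N_G(x)$), or the extra edge $\{x_{i1}, x_{i2}\} = \{x, x'\}$ of the second type. This is exactly $E(G')$. Next I would invoke Remark \ref{remexpan}, which gives $\Delta_{\widehat{G^\alpha}} = (\Delta_G)^\alpha$; hence $\Delta_{G'} \cong (\Delta_G)^\alpha$ as simplicial complexes. Finally I would apply the results on the Stanley--Reisner-type expansion functor: by \cite[Theorem 2.4, Corollary 2.15]{RaMo} (and the vertex-decomposability/shellability results cited there), a simplicial complex $\Delta$ is Cohen--Macaulay (resp. sequentially Cohen--Macaulay, shellable) if and only if $\Delta^\alpha$ is. Applying this with $\Delta = \Delta_G$ yields that $G$ has the corresponding property if and only if $G'$ does; in particular the ``only if'' direction gives the claim.

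Concretely the proof would read: let $x = x_i$ and $\alpha = \mathbf{1} + \delta_i$; show $G' \cong \widehat{G^\alpha}$ by comparing edge sets as above; conclude $\Delta_{G'} \cong (\Delta_G)^\alpha$ by Remark \ref{remexpan}; then cite the equivalence of Cohen--Macaulayness (resp. sequential Cohen--Macaulayness, shellability) of a complex and its expansion from \cite{RaMo} to finish. I do not anticipate a serious obstacle here --- the content is essentially bookkeeping --- but the one point that needs care is the vertex identification: one must be sure that ``connecting $x'$ to all vertices in $N_G[x]$'' genuinely corresponds to the edge set produced by the hat-expansion, including the edge $\{x, x'\}$ coming from the second family $\{\{x_{ir}, x_{is}\} : r \neq s\}$ in Remark \ref{remexpan}, rather than only the neighbours of $x$. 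Once that is pinned down, everything else is a direct appeal to the already-established expansion theorems.
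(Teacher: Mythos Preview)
Your proposal is correct and follows essentially the same route as the paper: recognize $G'$ as the hat-expansion $\widehat{G^{\alpha}}$ of $G$ (with $\alpha=\mathbf{1}+\delta_i$), use Remark~\ref{remexpan} to get $\Delta_{G'}\cong(\Delta_G)^{\alpha}$, and then invoke the results of \cite{RaMo} on the transfer of Cohen--Macaulayness, sequential Cohen--Macaulayness, and shellability under expansion. The only quibble is your opening reference to Theorem~\ref{graph}: that theorem concerns the facet-ideal expansion $G^{\alpha}$ and co-properties of the complement, and is not actually used in your argument (nor in the paper's); the work is done entirely by Remark~\ref{remexpan} and \cite{RaMo}.
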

\begin{proof}
Note that $G'$ is an expansion of $G$ in the sense of Remark \ref{remexpan}. Thus $\Delta_{G'}$ is an expansion of $\Delta_G$ and hence \cite[Theorem 2.4, Corollaries 2.8, 2.15]{RaMo} imply the result.
\end{proof}
\begin{corollary}\label{G2}
Let $G$ be a graph and  $x,y\in V(G)$ be distinct vertices such that $N_G[x]=N_G[y]$. If $G$ is Cohen-Macaulay (resp. sequentially Cohen-Macaulay and shellable), then $G\setminus x$ is Cohen-Macaulay (resp. sequentially Cohen-Macaulay and shellable).
\end{corollary}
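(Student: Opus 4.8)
The plan is to deduce Corollary \ref{G2} from Corollary \ref{G1}, using the observation that when $N_G[x]=N_G[y]$ the graph $G$ is itself an expansion of $G\setminus x$. First I would verify this structural claim: set $H=G\setminus x$, and note that $y\in V(H)$ since $x\neq y$; because $N_G[x]=N_G[y]$, the vertex $x$ in $G$ is adjacent exactly to the vertices of $N_G(y)=N_H(y)$ together with $y$ (the latter since $x\in N_G[y]$ forces $\{x,y\}\in E(G)$). Thus $G$ is obtained from $H$ by adding a new vertex $x$ and joining it to every vertex of $N_H[y]$, which is precisely the construction of $G'$ from $G$ in Corollary \ref{G1} applied to $H$ and the vertex $y$. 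Equivalently, $G=\widehat{H^{\alpha}}$ for $\alpha=\mathbf{1}+\delta_{i}$ where $x_i$ is the position of $y$, so $\Delta_G=(\Delta_H)^{\alpha}$ by Remark \ref{remexpan}.

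Next I would run the implication in the useful direction. We are given that $G$ is Cohen-Macaulay (resp.\ sequentially Cohen-Macaulay, shellable), i.e.\ $\Delta_G$ has the corresponding property, and we want the same for $\Delta_H=\Delta_{G\setminus x}$. Since $\Delta_G=(\Delta_H)^{\alpha}$ is an expansion of $\Delta_H$, I invoke the equivalences from \cite[Theorem 2.4, Corollaries 2.8, 2.15]{RaMo}, which assert that $\Delta_H$ is Cohen-Macaulay (resp.\ sequentially Cohen-Macaulay, shellable) if and only if $(\Delta_H)^{\alpha}$ is. Reading these biconditionals from right to left gives exactly that $\Delta_H$ inherits the property from $\Delta_G$, which is the conclusion of the corollary. (Note that Corollary \ref{G1} itself only uses the right-to-left direction implicitly via the same references, so this is a genuinely symmetric situation.)

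The only real subtlety, and the step I would be most careful about, is the structural identification in the first paragraph: one must check that the hypothesis $N_G[x]=N_G[y]$ is exactly what is needed for $G$ to be the expansion $\widehat{H^{\alpha}}$ at the vertex $y$ and not something weaker or stronger. The point is that in $\widehat{H^{\alpha}}$ with $\alpha=\mathbf 1+\delta_i$, the two copies $x_{i1},x_{i2}$ of $y$ are adjacent to each other and have identical neighbourhoods among the remaining vertices; translating back, this says precisely $N_G[x]=N_G[y]$ with $\{x,y\}\in E(G)$. Since $N_G[x]=N_G[y]$ automatically gives $y\in N_G[x]$, hence $x\in N_G[y]$ and $\{x,y\}\in E(G)$ (as $x\neq y$), the hypothesis is exactly right. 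Once this is pinned down, the rest is a direct appeal to Remark \ref{remexpan} and the cited results from \cite{RaMo}, with no computation required.
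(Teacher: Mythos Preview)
Your approach is exactly the paper's: show that $G$ is an expansion of $H=G\setminus x$ in the sense of Remark~\ref{remexpan}, hence $\Delta_G=(\Delta_H)^\alpha$, and then invoke the biconditionals of \cite[Theorem 2.4, Corollaries 2.8, 2.15]{RaMo} in the direction $(\Delta_H)^\alpha\Rightarrow\Delta_H$. One small slip to fix: you write $N_G(y)=N_H(y)$, but in fact $N_H(y)=N_G(y)\setminus\{x\}$ (since $x\in N_G(y)$); the correct computation is $N_G(x)=N_G[x]\setminus\{x\}=N_G[y]\setminus\{x\}=N_H[y]$, which is what you then use anyway.
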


\begin{proof}
It is easy to see that $G$ is an expansion of $G\setminus x$ in the sense of Remark \ref{remexpan}. Thus $\Delta_G$ is an expansion of $\Delta_{G\setminus x}$.
\end{proof}

\section*{\bf Acknowledgments}
The research of the first author was in part supported by grants from IPM with number (No. 95130021). The second author was supported by the research council of the University of Maragheh and a grant from IPM (No. 94130029) .

\end{document}